\newtheorem{theorem}{Theorem}[section]
\newtheorem{corollary}{Corollary}[theorem]
\begin{document}
\title{On geometric constants for discrete Morrey spaces}
\author[A.~Adam]{Adam Adam}
\address{Analysis and Geometry Group, Faculty of Mathematics and Natural Sciences,
Bandung Institute of Technology, Bandung 40132, Indonesia}
\email{adam\_adam@students.itb.ac.id}

\author[H.~Gunawan]{Hendra Gunawan}
\address{Analysis and Geometry Group, Faculty of Mathematics and Natural Sciences,
Bandung Institute of Technology, Bandung 40132, Indonesia}
\email{hgunawan@math.itb.ac.id}

\subjclass[2010]{46B20}

\keywords{$n$-th Von Neumann-Jordan constant, $n$-th James constant, discrete Morrey spaces,
uniformly non-$\ell^1$ spaces, uniformly $n$-convex spaces}

\maketitle

\begin{abstract}
In this note we prove that the $n$-th Von Neumann-Jordan constant and the $n$-th James
constant for discrete Morrey spaces $\ell^p_q$ where $1\le p<q<\infty$ are both equal to
$n$. This result tells us that the discrete Morrey spaces are not uniformly non-$\ell^1$,
and hence they are not uniformly $n$-convex.
\end{abstract}

\section{Introduction}
Let $n\ge 2$ be a non-negative integer and $(X,\|\cdot\|)$ be a Banach space.
The $n$-th {\em Von Neumann-Jordan constant} for $X$ \cite{KTH} is defined by
$$
C_{NJ}^{(n)}(X) := \sup \bigg\{\frac{\sum_\pm \|u_1 \pm u_2 \pm \dots \pm u_n \|_{X}^{2}}{2^{n-1}
\sum_{i=1}^{n} \|u_i\|_X} : u_i \neq 0, i = 1,2,\dots,n\bigg\}
$$
and the $n$-th {\em James constant} for $X$ \cite{MNPZ} is defined by
$$
C_{J}^{(n)}(X) := \sup \{\min \|u_1 \pm u_2 \pm \dots \pm u_n\| : u_i \in S_X, i = 1,2,\dots,n \}.
$$
Note that in the definition of $C_{NJ}^{(n)}(X)$, the sum $\sum_\pm$ is taken over all possible
combinations of $\pm$ signs. Similarly, in the definition of
$C_{J}^{(n)}(X)$, the minimum is taken over all possible
combinations of $\pm$ signs, while $S_X$ denotes the unit sphere in $X$, that is, $S_X=\{u \in X : \|u\| = 1\}$.

We say that $X$ is uniformly $n$-convex \cite{GHP} if for every $\varepsilon \in  (0,n]$ there exists
a $\delta \in (0,1)$ such that for every $u_1,u_2,\dots,u_n \in S_X$ with $\|u_1 \pm u_2 \pm \cdots \pm u_n\|
\geq \varepsilon$ for all combinations of $\pm$ signs except for $\|u_1 + u_2 + \cdots + u_n\|$,
we have
$$
    \|u_1 + u_2 + \cdots + u_n\| \leq n(1-\delta).
$$
Meanwhile, we say that $X$ is uniformly non-$\ell_{n}^{1}$
\cite{B,J,W} if there exists a $\delta \in (0,1)$ such that for every $u_1,u_2,\dots,u_n \in S_X$ we have
$$
\min \|u_1 \pm u_2 \pm \dots \pm u_n\| \leq n(1-\delta).
$$
Note that for $n=2$, uniformly non-$\ell_{n}^{1}$ spaces are known as uniformly nonsquare spaces, while for $n=3$
they are known as uniformly non-octahedral spaces.
One may verify that if $X$ is uniformly $n$-convex, then $X$ is uniformly non-$\ell_n^1$ \cite{GHP}.

Now a few remarks about the two constants, and their associations with the uniformly non-$\ell_n^1$ and
uniformly $n$-convex properties.
\begin{itemize}
    \item $1 \leq C_{NJ}^{(n)}(X) \leq n$ and
    $C_{NJ}^{(n)}(X) = 1$ if and only if $X$ is a Hilbert space \cite{KTH}.
    \item $1 \leq C_{J}^{(n)}(X) \leq n$. If $\dim(X) = \infty$, then $\sqrt{n} \leq C_{J}^{(n)}(X) \leq n$.
    Moreover, if $X$ is a Hilbert space, then $C_{J}^{(n)}(X) = \sqrt{n}$ \cite{MNPZ}.
    \item $X$ is uniformly non-$\ell_{n}^{1}$ if and only if $C_{NJ}^{(n)}(X) < n$ \cite{KTH}.
    \item $X$ is uniformly non-$\ell_{n}^{1}$ if and only if $C_{J}^{(n)}(X) < n$ \cite{MNPZ}.
\end{itemize}
The last two statements tell us that if $C_{NJ}^{(n)}(X)=n$ or $C_{J}^{(n)}(X)=n$, then $X$ is not uniformly non-$\ell_n^1$
and hence not uniformly $n$-convex.

In this note, we shall compute the two constants for discrete Morrey spaces. Let $\omega :=
\mathbb{N} \cup \{0\}$ and $m = (m_1,m_2,\dots,m_d) \in \mathbb{Z}^d$. Define
$$
S_{m,N} := \{k \in \mathbb{Z}^d : \|k-m\|_\infty \leq N\}
$$
where $N \in \omega$ and $\|m\|_\infty = \max\{|m_i|:1\leq i \leq d\}$. Denote by $|S_{m,N}|$ the
cardinality of $S_{m,N}$ for $m \in \mathbb{Z}^d$ and $N \in \omega$. Then we have $|S_{m,N}| = (2N+1)^d$.

Now let $1 \leq p \leq q < \infty$. Define $\ell_{q}^{p}=\ell_{q}^{p}(\mathbb{Z}^d)$ to be the
discrete Morrey space as introduced in \cite{GKS}, which consists of all sequences
$x : \mathbb{Z}^d \rightarrow \mathbb{R}$ with
$$
    \|x\|_{\ell_{q}^{p}} := \underset{m \in \mathbb{Z}^d, N \in \omega}{\sup} |S_{m,N}|^{\frac{1}{q} -
    \frac{1}{p}} \bigg( \sum_{k \in S_{m,N}} |x_k|^p \bigg)^{\frac{1}{p}} < \infty,
$$
where $x:=(x_k)$ with $k\in\mathbb{Z}^d$.
One may observe that discrete Morrey spaces are Banach spaces \cite{GKS}. Note, in particular, that
for $p=q$, we have $\ell^p_q=\ell^q$.

From \cite{GKSS} we already know that $C_{NJ}(\ell_{q}^{p})= C_{J}(\ell_{q}^{p}) = 2$, so that
$\ell_{q}^{p}$ is not uniformly nonsquare. In this note, we shall show that
$C_{NJ}^{(n)}(\ell_{q}^{p})= C_{J}^{(n)}(\ell_{q}^{p}) = n$, which leads us to the
conclusion that $\ell_{q}^{p}$ is not uniformly non-$\ell_{n}^{1}$, which is sharper than the
existing result. (If $X$ is not uniformly non-$\ell_n^1$, then $X$ is not uniformly non-$\ell_{n-1}^1$, provided that
$n\ge3$.)

\section{Main Results}

The value of the $n$-th Von Neumann-Jordan constant and the $n$-th James constant for discrete
Morrey spaces are stated in the following theorems. To understand the idea of the proof, we
first present the result for $n=3$.

\begin{theorem}
For $1 \leq p < q < \infty$, we have $C_{NJ}^{(3)}(\ell_{q}^{p}(\mathbb{Z}^d)) =
C_{J}^{(3)}(\ell_{q}^{p}(\mathbb{Z}^d)) = 3$.
\end{theorem}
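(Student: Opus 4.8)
The plan is to establish both equalities at once by producing a single triple of unit vectors that simultaneously witnesses $C_{J}^{(3)}(\ell_q^p)=3$ and $C_{NJ}^{(3)}(\ell_q^p)=3$. Since the general estimates $C_{NJ}^{(3)}(X)\le 3$ and $C_{J}^{(3)}(X)\le 3$ hold in every Banach space, it is enough to exhibit $u_1,u_2,u_3\in S_{\ell_q^p}$ for which \emph{every} signed combination $u_1\pm u_2\pm u_3$ has norm exactly $3$. Such a triple forces the James minimum to be $3$, and upon substitution into the Von Neumann--Jordan quotient it gives $\frac{4\cdot 3^{2}}{4\cdot 3}=3$; combined with the upper bounds this yields both constants.

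The construction assigns to each admissible sign pattern its own far-away block. Write the four patterns as $\tau=(\tau_1,\tau_2,\tau_3)$ with $\tau_1=+$, and for each $\tau$ fix a cube $B_\tau=S_{m_\tau,N}$ in $\mathbb{Z}^d$, the four centres $m_\tau$ being chosen pairwise far apart. On $B_\tau$ place the normalised plateau $f_\tau=(2N+1)^{-d/q}\chi_{B_\tau}$; recall that a constant sequence on $S_{m,N}$ has Morrey norm equal to its height times $(2N+1)^{d/q}$, attained at the window $S_{m,N}$ itself, precisely because the negative exponent $\frac1q-\frac1p$ penalises larger windows. Finally set
$$
u_i:=\sum_{\tau} \tau_i\, f_\tau, \qquad i=1,2,3.
$$

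Two features drive the argument. First, because the blocks are disjoint and $\tau_i\in\{+1,-1\}$, the absolute value $|u_i|=\sum_\tau f_\tau=:g$ is the same for every $i$; as the Morrey norm depends only on $|x|$, we get $\|u_1\|=\|u_2\|=\|u_3\|=\|g\|$, and the blocks will be spread out enough to make $\|g\|=1$. Second, for the combination whose signs are exactly $\tau$, its restriction to the matching block equals $\sum_i\tau_i u_i\big|_{B_\tau}=\big(\sum_i\tau_i^{2}\big)f_\tau=3f_\tau$, which is three times a unit-norm plateau and therefore contributes window value $3$ there. Hence that combination has norm $\ge 3$, while the triangle inequality gives $\le 3$, pinning it to exactly $3$; ranging over all four $\tau$ covers every signed combination.

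The hard part will be the verification that $\|g\|=1$, i.e. that no window can over-count the mass of several blocks simultaneously. This is exactly where the hypothesis $p<q$ is used: enlarging a window to engulf a second block costs more in the weight $|S_{m,N}|^{\frac1q-\frac1p}$ than it recovers in accumulated $\ell^p$-mass. I would make this quantitative by checking that a window of radius $R$ meeting $t$ of the blocks has window value at most $t^{1/p}\big(\tfrac{2N+1}{2R+1}\big)^{d(1/p-1/q)}$; since containing $t\ge 2$ blocks forces $R$ to be of the order of the (large) separation, a sufficiently generous spacing keeps this strictly below $1$ for all $t\ge 2$, whereas each single-block window attains the value $1$ at $R=N$. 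Once $\|g\|=1$ is secured the remaining bookkeeping is routine, and the same triple settles both constants. The general-$n$ theorem will then follow by the same recipe with $2^{n-1}$ blocks, one per sign pattern.
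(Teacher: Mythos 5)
Your construction is essentially the paper's: the paper takes your blocks with $N=0$ (the four points $0,j,2j,3j$, one per sign pattern $\tau$ with $\tau_1=+$) and sets $x^{(i)}_k=\tau_i$ there, which is exactly your $u_i=\sum_\tau \tau_i f_\tau$, with the separation $j$ chosen so that any window covering more than one support point has value below $1$ (resp.\ below $3$). Your sketch is correct; the only cosmetic differences are the plateau blocks of radius $N$ in place of single points and the use of the triangle inequality to get the upper bound $\|u_1\pm u_2\pm u_3\|\le 3$, where the paper instead estimates every multi-point window directly.
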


\begin{proof} To prove the theorem, it suffices for us to find $x^{(1)},x^{(2)},x^{(3)} \in \ell_{q}^{p}$ such that
$$\frac{\sum_\pm \|x^{(1)} \pm x^{(2)} \pm x^{(3)} \|_{\ell_{q}^{p}}^{2}}{2^{2}\sum_{i=1}^{3} \|x^{(i)}\|_{\ell_{q}^{p}}} = 3$$
for the Von Neumann-Jordan constant, and
$$
\min \|x^{(1)} \pm x^{(2)} \pm x^{(3)}\|_{\ell_{q}^{p}} = 3
$$
for the James constant.

{\em Case 1: $d=1$}. Let $j \in \mathbb{Z}$ be a nonnegative, even integer such that $j > 4^{\frac{q}{q-p}}-1$,
or equivalently
$$
(j+1)^{\frac{1}{q}-\frac{1}{p}} < 4^{-\frac{1}{p}}.
$$
Construct $x^{(1)},x^{(2)},x^{(3)} \in \ell_{q}^{p}(\mathbb{Z})$ as follows:

\begin{itemize}
\item $x^{(1)} = (x_{k}^{(1)})_{k \in \mathbb{Z}}$ is defined by
$$
    x_{k}^{(1)} = \begin{cases}
      1, & k = 0,j,2j,3j, \\
      0, & \text{otherwise}; \\
   \end{cases}
$$

\item $x^{(2)} = (x_{k}^{(2)})_{k \in \mathbb{Z}}$ is defined by
$$
    x_{k}^{(2)} = \begin{cases}
      1, & k = 0,j, \\
      -1, & k = 2j,3j, \\
      0, & \text{otherwise}; \\
   \end{cases}
$$

\item $x^{(3)} = (x_{k}^{(3)})_{k \in \mathbb{Z}}$ is defined by
$$
    x_{k}^{(3)} = \begin{cases}
      1, & k = 0,2j, \\
      -1, & k = j,3j, \\
      0, & \text{otherwise}. \\
   \end{cases}
$$
\end{itemize}
The three sequences are in the unit sphere of $\ell^p_q(\mathbb{Z})$. Indeed, for the first sequence, we have
\begin{align*}
        \|x^{(1)}\|_{\ell_{q}^{p}} &= \underset{m \in \mathbb{Z}, N \in \omega}{\sup} |S_{m,N}|^{\frac{1}{q} -
        \frac{1}{p}} \bigg( \sum_{k \in S_{m,N}} |x_{k}^{(1)}|^p \bigg)^{\frac{1}{p}} \\
        &= \underset{m \in \mathbb{Z}\cap[0,3j], N \in \mathbb{Z}\cap[0,3j/2]}{\sup} |S_{m,N}|^{\frac{1}{q} -
        \frac{1}{p}} \bigg( \sum_{k \in S_{m,N}} |x_{k}^{(1)}|^p \bigg)^{\frac{1}{p}} \\
        &= \max \{1,(j+1)^{\frac{1}{q}-\frac{1}{p}}2^{\frac{1}{p}},(2j+1)^{\frac{1}{q}-
        \frac{1}{p}}3^{\frac{1}{p}},(3j+1)^{\frac{1}{q}-\frac{1}{p}}4^{\frac{1}{p}}\}.
\end{align*}
Since $(3j+1)^{\frac{1}{q}-\frac{1}{p}}<(2j+1)^{\frac{1}{q}-\frac{1}{p}}<(j+1)^{\frac{1}{q}-\frac{1}{p}}
< 4^{-\frac{1}{p}}$, we get $\|x^{(1)}\|_{\ell_{q}^{p}} = 1$.
Similarly, one may observe that $\|x^{(2)}\|_{\ell_{q}^{p}} = \|x^{(3)}\|_{\ell_{q}^{p}} = 1$.

Next, we observe that
$$
    x_k^{(1)}+x_k^{(2)}+x_k^{(3)} =
    \begin{cases}
      3, & k = 0,\\
      1, & k = j,2j, \\
      -1, & k = 3j, \\
      0, & \text{otherwise};
   \end{cases}
$$
$$
    x_k^{(1)}+x_k^{(2)}-x_k^{(3)} =
    \begin{cases}
      3, & k = j, \\
      1, & k = 0,3j, \\
      -1, & k = 2j, \\
      0, & \text{otherwise};
   \end{cases}
$$
$$
    x_k^{(1)}-x_k^{(2)}+x_k^{(3)} =
    \begin{cases}
      3, & k = 2j, \\
      1, & k = 0,3j, \\
      -1, & k = j, \\
      0, & \text{otherwise};
   \end{cases}
$$
$$
   x_k^{(1)}-x_k^{(2)}-x_k^{(3)} =
   \begin{cases}
      3, & k = 3j, \\
      1, & k = j,2j, \\
      -1, & k = 0, \\
      0, & \text{otherwise}.
   \end{cases}
$$
We first compute that
\[
       \|x^{(1)}+x^{(2)} + x^{(3)}\|_{\ell_{q}^{p}} = \max \{3,(j+1)^{\frac{1}{q}-\frac{1}{p}}(3^p+1)^{\frac{1}{p}},
        (2j+1)^{\frac{1}{q}-\frac{1}{p}}(3^p+2)^{\frac{1}{p}},
        (3j+1)^{\frac{1}{q}-\frac{1}{p}}(3^p+3)^{\frac{1}{p}}\}.
\]
Notice that
\begin{itemize}
        \item $(j+1)^{\frac{1}{q}-\frac{1}{p}}(3^p+1)^{\frac{1}{p}} <
        \Bigl(\frac{3^p + 1^p  }{4}\Bigr)^{\frac{1}{p}} < (3^p)^\frac{1}{p} = 3.$

        \item $
        (2j+1)^{\frac{1}{q}-\frac{1}{p}}(3^p+2)^{\frac{1}{p}} < (j+1)^{\frac{1}{q}-
        \frac{1}{p}}(3^p+2)^{\frac{1}{p}} < \Bigl(\frac{3^p + 2  }{4}\Bigr)^{\frac{1}{p}} < 3.$

        \item $
        (3j+1)^{\frac{1}{q}-\frac{1}{p}}(3^p+3)^{\frac{1}{p}} < (j+1)^{\frac{1}{q}-
        \frac{1}{p}}(3^p+3)^{\frac{1}{p}} < \Bigl(\frac{3^p + 3}{4}\Bigr)^{\frac{1}{p}} < 3.$
\end{itemize}
Hence, we obtain $\|x^{(1)}+x^{(2)} + x^{(3)}\|_{\ell_{q}^{p}} = 3.$

Similarly, we have
$$
\|x^{(1)} \pm x^{(2)} \pm x^{(3)}\|_{\ell_{q}^{p}} = \underset{m \in \mathbb{Z}\cap[0,3j],
N \in \mathbb{Z}\cap[0,3j/2]}{\sup} |S_{m,N}|^{\frac{1}{q} - \frac{1}{p}}
\bigg( \sum_{k \in S_{m,N}} |x^{(1)}_k\pm x^{(2)}_k\pm x^{(3)}_k|^p \bigg)^{\frac{1}{p}} = 3
$$
for every combination of $\pm$ signs.

Consequently, $\frac{\sum_\pm \|x^{(1)} \pm x^{(2)} \pm x^{(3)} \|_{\ell_{q}^{p}}^{2}}{2^{2}
\sum_{i=1}^{3} \|x^{(i)}\|_{\ell_{q}^{p}}} = 3$ and $\min \|x^{(1)} \pm x^{(2)} \pm x^{(3)}\|_{\ell_{q}^{p}} = 3$,
so we come to the conclusion that
$$
    C_{NJ}^{(3)}(\ell_{q}^{p}(\mathbb{Z})) = C_{J}^{(3)}(\ell_{q}^{p}(\mathbb{Z})) = 3.
$$

{\em Case 2: $d > 1$}. Let $j \in \mathbb{Z}$ be a nonnegative, even integer such that
$j > 4^{\frac{q}{d(q-p)}}-1$, which is equivalent to
$$
(j+1)^{d(\frac{1}{q}-\frac{1}{p})} < 4^{-\frac{1}{p}}.
$$
We then construct $x^{(1)},x^{(2)},x^{(3)} \in \ell_{q}^{p}(\mathbb{Z}^d)$ as follows:
\begin{itemize}
\item $x^{(1)} = (x_k^{(1)})_{k \in \mathbb{Z}^d}$ is defined by
    $$
    x_{k}^{(1)} =
    \begin{cases}
      1, & k = (0,0,\dots,0),(j,0,\dots,0),(2j,0,\dots,0),(3j,0,\dots,0), \\
      0, & \text{otherwise}; \\
   \end{cases}
    $$
\item $x^{(2)} = (x_{k}^{(2)})_{k \in \mathbb{Z}^d}$ is defined by
    $$
    x_{k}^{(2)} =
    \begin{cases}
      1, & k = (0,0,\dots,0),(j,0,\dots,0), \\
      -1, & k = (2j,0,\dots,0),(3j,0,\dots,0), \\
      0, & \text{otherwise}; \\
   \end{cases}
    $$
\item $x^{(3)} = (x_{k}^{(3)})_{k \in \mathbb{Z}^d}$ is defined by
    $$
    x_{k}^{(3)} =
    \begin{cases}
      1, & k = (0,0,\dots,0),(2j,0,\dots,0), \\
      -1, & k = (j,0,\dots,0),(3j,0,\dots,0), \\
      0, & \text{otherwise}. \\
   \end{cases}
    $$
\end{itemize}

As in the case where $d=1$, one may observe that
\begin{align*}
        \|x^{(1)}\|_{\ell_{q}^{p}} &= \underset{m \in \mathbb{Z}^d, N \in \omega}{\sup} |S_{m,N}|^{\frac{1}{q}
        - \frac{1}{p}} \bigg( \sum_{k \in S_{m,N}} |x_{k}^{(1)}|^p \bigg)^{\frac{1}{p}} \\
        &= \max \{1,(j+1)^{d(\frac{1}{q}-\frac{1}{p})}2^{\frac{1}{p}},(2j+1)^{d(\frac{1}{q}-
        \frac{1}{p})}3^{\frac{1}{p}},(3j+1)^{d(\frac{1}{q}-\frac{1}{p})}4^{\frac{1}{p}}\} \\
        &= 1.
\end{align*}
We also get $\|x^{(2)}\|_{\ell_{q}^{p}} = \|x^{(3)}\|_{\ell_{q}^{p}} = 1.$
Moreover, through similar observation as in the 1-dimensional case, we have
$$
\|x^{(1)} \pm x^{(2)} \pm x^{(3)}\|_{\ell_{q}^{p}} = 3
$$
for every possible combinations of $\pm$ signs. It thus follows that
$$
C_{J}^{(3)}(\ell_{q}^{p}(\mathbb{Z}^d)) = \sup \{\min \|x_1 \pm x_2 \pm x_3\|_{\ell_{q}^{p}} : x_1,x_2,x_3
\in S_{\ell_{q}^{p}}\} = 3
$$
and
$$
C_{NJ}^{(3)}(\ell_{q}^{p}(\mathbb{Z}^d)) = \sup \bigg\{\frac{\sum_\pm \|x_1 \pm x_2 \pm x_3 \|_{\ell_{q}^{p}}^{2}}{2^{2}
\sum_{i=1}^{3} \|x_i\|_{\ell_{q}^{p}}} : x_i \neq 0, i = 1,2,3\bigg\} = 3.
$$
\end{proof}

We now state the general result for $n\ge3$. (The proof is also valid for $n=2$, which amounts to the work of \cite{GKS}.)

\begin{theorem}
    For $1 \leq p < q < \infty$, we have $C_{NJ}^{(n)}(\ell_{q}^{p}(\mathbb{Z}^d)) = C_{J}^{(n)}(\ell_{q}^{p}(\mathbb{Z}^d)) = n$.
\end{theorem}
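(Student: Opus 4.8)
The plan is to mimic the $n=3$ construction above, replacing the four points $0,j,2j,3j$ by the $2^{n-1}$ points $0,j,2j,\dots,(2^{n-1}-1)j$ and replacing the three sign patterns by a Hadamard-type family of $\pm1$ patterns. Concretely, for $d=1$ I would index these points by $\ell\in\{0,1,\dots,2^{n-1}-1\}$, write $\ell$ in binary as $\ell=\sum_{r=1}^{n-1}b_r(\ell)2^{r-1}$ with $b_r(\ell)\in\{0,1\}$, set $x^{(1)}$ equal to $1$ at every one of these points, and for $i=2,\dots,n$ set $x^{(i)}$ equal to $(-1)^{b_{i-1}(\ell)}$ at the point $\ell j$ (and $0$ off the support). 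For $d>1$ I would place the same patterns along the first axis, at the points $(\ell j,0,\dots,0)$. Throughout I would fix $j$ to be an even nonnegative integer large enough that
\[
(j+1)^{d(\frac1q-\frac1p)}<2^{-\frac{n-1}{p}},
\]
which for $n=3$ reduces exactly to the threshold $4^{-1/p}$ used above.

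The combinatorial heart of the argument is that for any signs $\varepsilon_2,\dots,\varepsilon_n\in\{+1,-1\}$ the sequence $y:=x^{(1)}+\varepsilon_2x^{(2)}+\dots+\varepsilon_nx^{(n)}$ takes at the point $\ell j$ the value $1+\sum_{i=2}^n\varepsilon_i(-1)^{b_{i-1}(\ell)}$. This equals $n$ precisely when $(-1)^{b_{i-1}(\ell)}=\varepsilon_i$ for all $i$, which pins down a unique $\ell$; at every other point the value lies in $\{n-2,n-4,\dots,2-n\}$ and so has absolute value at most $n-2$. Thus each of the $2^{n-1}$ signed sums attains the extreme value $\pm n$ at exactly one support point, in perfect analogy with the $n=3$ table. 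I would also record the crude mass bound $\sum_\ell|y_\ell|^p\le 2^{n-1}n^p$ for later use.

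With these facts the norm computations follow the same template as the $n=3$ case. For the unit-norm claim, a cube $S_{m,N}$ meeting the support in a single point contributes exactly $1$, while any cube meeting it in at least two points has first-coordinate spread $2N\ge j$, hence $|S_{m,N}|\ge(j+1)^d$ and inner sum at most $2^{n-1}$; the choice of $j$ forces its contribution below $(j+1)^{d(\frac1q-\frac1p)}(2^{n-1})^{\frac1p}<1$, so $\|x^{(i)}\|_{\ell_q^p}=1$ for each $i$. For the signed sums, the unique peak point gives $\|y\|_{\ell_q^p}\ge n$, a single non-peak point contributes at most $n-2$, and any cube with at least two support points satisfies $|S_{m,N}|\ge(j+1)^d$ with inner sum at most $2^{n-1}n^p$, so that its contribution is at most $(j+1)^{d(\frac1q-\frac1p)}(2^{n-1})^{\frac1p}n<n$ by the same threshold. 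Hence $\|x^{(1)}\pm x^{(2)}\pm\dots\pm x^{(n)}\|_{\ell_q^p}=n$ for every combination of signs.

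Finally I would substitute these values into the two definitions. Since all $2^{n-1}$ signed sums have norm $n$ and all $n$ vectors are unit vectors, the James minimum is $n$, giving $C_J^{(n)}(\ell_q^p)\ge n$, while the Von Neumann--Jordan ratio equals $\frac{2^{n-1}n^2}{2^{n-1}n}=n$, giving $C_{NJ}^{(n)}(\ell_q^p)\ge n$; combined with the universal upper bounds $C_J^{(n)}\le n$ and $C_{NJ}^{(n)}\le n$ this yields the claimed equalities. I expect the main obstacle to be purely organizational: checking that a \emph{single} threshold on $j$ simultaneously controls every cube, from those containing two support points up to the one containing all $2^{n-1}$ of them. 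The reason it succeeds is that the worst possible inner mass, $2^{n-1}n^p$, is compensated by the negativity of the exponent $\frac1q-\frac1p$, so enlarging a cube only decreases its contribution; the crude mass bound is therefore exactly strong enough, and no finer point-by-point case analysis (such as the one displayed for $n=3$) is actually required.
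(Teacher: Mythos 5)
Your proposal is correct and follows essentially the same route as the paper: the same support set $\{0,j,\dots,(2^{n-1}-1)j\}$ along the first coordinate axis, the same Rademacher/Hadamard family of sign patterns (your binary-digit description $(-1)^{b_{i-1}(\ell)}$ is just a relabelling of the paper's block partition of $P$ into alternating $\pm1$ blocks of length $2^{n-i}$), the same threshold $(j+1)^{d(\frac1q-\frac1p)}<2^{-\frac{n-1}{p}}$, and the same norm estimates via the observation that any cube containing at least two support points has cardinality at least $(j+1)^d$. Your uniform mass bound $2^{n-1}n^p$ cleanly packages the paper's term-by-term maximum, but the argument is the same.
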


\begin{proof}
As for $n=3$, we shall consider the case where $d=1$ first, and then the case where $d>1$ later.

{\em Case 1: $d = 1$}. Let $j \in \mathbb{Z}$ be a nonnegative, even integer such that
$j > 2^{(n-1)(\frac{q}{q-p})}-1$, which is equivalent to
$$
(j+1)^{\frac{1}{q}-\frac{1}{p}} < 2^{-\frac{(n-1)}{p}}.
$$
We construct $x^{(i)} \in \ell_{q}^{p} \in \mathbb{Z}$ for $i = 1,2,\dots,n$ as follows:

\begin{itemize}
\item  $x^{(1)} = (x_{k}^{(1)})_{k \in \mathbb{Z}}$ is defined by
$$
    x_{k}^{(1)} =
    \begin{cases}
      1, & k \in S_{1}^{(1)}, \\
      0, & \text{otherwise}, \\
   \end{cases}
$$
where
    \begin{align*}
        S_{1}^{(1)} = \{0,j,2j,3j,\dots, (2^{n-1}-1)j\};
    \end{align*}
\item $x^{(i)} = (x_{k}^{(i)})_{k \in \mathbb{Z}}$ for $2\leq i \leq n$ is defined by
$$
    x_{k}^{(i)} =
    \begin{cases}
      1, & k \in S_{1}^{(i)}, \\
      -1, & k \in S_{-1}^{(i)}, \\
      0, & \text{otherwise}, \\
   \end{cases}
$$
with the following rules: Write $P = \{0,j,2j,\dots, (2^{n-1}-1)j\}$ as
$$
    P = P_{1}^{(i)} \cup P_{2}^{(i)} \cup \dots \cup P_{2^{i-1}}^{(i)}
$$
where $P_{1}^{(i)}$ consists of the first $\frac{2^{n-1}}{2^{i-1}}$ terms of $P$, $P_{2}^{(i)}$ consists of the next
$\frac{2^{n-1}}{2^{i-1}}$ terms of $P$, and so on. Then $S_1^{(i)}$ and $S_{-1}^{(i)}$ are given by
    \begin{align*}
    S_{1}^{(i)} &= P_{1}^{(i)} \cup P_{3}^{(i)} \cup \dots \cup P_{2^{i-1}-1}^{(i)}, \\
    S_{-1}^{(i)} &= P_{2}^{(i)} \cup P_{4}^{(i)} \cup \dots \cup P_{2^{i-1}}^{(i)}.
    \end{align*}
For example, for $i=2$, $x^{(2)} = (x_{k}^{(2)})_{k \in \mathbb{Z}}$ is defined by
$$
    x_{k}^{(2)} =
    \begin{cases}
      1, & k \in S_{1}^{(2)}, \\
      -1, & k \in S_{-1}^{(2)}, \\
      0, & \text{otherwise}, \\
   \end{cases}
$$
where
    \begin{align*}
        S_{1}^{(2)} &= \bigg\{0,j,2j,3j,\dots, \Bigl(\frac{2^{n-1}}{2}-1\Bigr)j\bigg\} \\
        S_{-1}^{(2)} &= \bigg \{\Bigl(\frac{2^{n-1}}{2}\Bigr)j,\Bigl(\frac{2^{n-1}}{2}+1\Bigr)j,\dots, (2^{n-1}-1)j\bigg\};
    \end{align*}
\end{itemize}

Note that the largest absolute value of the terms of $x^{(i)}$ in the above construction will be equal to $1$ for each $i=1,\dots,n$.
Next, since the number of possible combinations of $\pm$ signs in $x^{(1)} \pm x^{(2)} \pm \dots \pm x^{(n)}$
is $2^{n-1}$, the above construction will give us $1+1+\dots+1 = n$ as the largest absolute value of
$x^{(1)} \pm x^{(2)} \pm \dots \pm x^{(n)}$ for every combination of $\pm$ signs. This means that, if $x^{(1)} \pm x^{(2)} \pm \dots
\pm x^{(n)} = (x_k)_{k \in \mathbb{Z}}$, then $\max\limits_{k\in\mathbb{Z}} |x_k| = n$.

Let us now compute the norms. For $x^{(1)}$, we have
    \begin{align*}
        \|x^{(1)}\|_{\ell_{q}^{p}} &= \underset{m \in \mathbb{Z}, N \in \omega}{\sup} |S_{m,N}|^{\frac{1}{q} - \frac{1}{p}} \bigg( \sum_{k \in S_{m,N}} |x_{k}^{(1)}|^p \bigg)^{\frac{1}{p}} \\
        &= \underset{m \in \mathbb{Z}\cap[0,(2^{n-1}-1)j], N \in \mathbb{Z}\cap[0,(2^{n-1}-1)j/2]}{\sup} |S_{m,N}|^{\frac{1}{q} - \frac{1}{p}} \bigg( \sum_{k \in S_{m,N}} |x_{k}^{(1)}|^p \bigg)^{\frac{1}{p}}
        \\
        &= \max \{1,(j+1)^{\frac{1}{q}-\frac{1}{p}}2^{\frac{1}{p}}, (2j+1)^{\frac{1}{q}-\frac{1}{p}}3^{\frac{1}{p}},\dots,
        ((2^{n-1}-1)j+1)^{\frac{1}{q}-\frac{1}{p}}2^{\frac{n-1}{p}}\}.
    \end{align*}
For each $r=1,2,\dots,2^{n-1}-1$, we have $(rj+1)^{\frac{1}{q}-\frac{1}{p}} \leq (j+1)^{\frac{1}{q}-\frac{1}{p}}$ and
$(r+1)^{\frac{1}{p}} \leq 2^{\frac{n-1}{p}}$, so that
$$
    (rj+1)^{\frac{1}{q}-\frac{1}{p}}(r+1)^{\frac{1}{p}} \leq (j+1)^{\frac{1}{q}-\frac{1}{p}}2^{\frac{n-1}{p}} <
    2^{-\frac{n-1}{p}}2^{\frac{n-1}{p}} = 1.
$$
Hence we obtain $\|x^{(1)}\|_{\ell_{q}^{p}} = 1$. Similarly, one may verify that
$$
\|x^{(2)}\|_{\ell_{q}^{p}} = \|x^{(3)}\|_{\ell_{q}^{p}} = \dots = \|x^{(n)}\|_{\ell_{q}^{p}} = 1.
$$
Next, we shall compute the norms of
$
    x^{(1)} \pm x^{(2)} \pm \dots \pm x^{(n)}.
$
Write $x^{(1)} + x^{(2)} + \dots + x^{(n)} = (x_k)_{k \in \mathbb{Z}}$ where
$$
    x_k :=
    \begin{cases}
      a_1, & k = 0,  \\
      a_2, & k = j, \\
      a_3, &k = 2j, \\
      \ \vdots & \\
      a_{2^{n-1}}, &k = (2^{n-1}-1)j, \\
      0, & \text{otherwise}, \\
   \end{cases}
$$
with $a_1=n$ and $|a_i| < n$ for $i=2,3,\dots, (2^{n-1})j$.
Accordingly, we have
    \begin{align*}
        \|x^{(1)}+x^{(2)} + \dots + x^{(n)}\|_{\ell_{q}^{p}} = &\underset{m \in \mathbb{Z}, N \in \omega}
        {\sup} |S_{m,N}|^{\frac{1}{q} - \frac{1}{p}} \bigg( \sum_{k \in S_{m,N}} |x_k|^p \bigg)^{\frac{1}{p}} \\
        = &\underset{m \in \mathbb{Z}\cap[0,(2^{n-1}-1)j], N \in \mathbb{Z}\cap[0,(2^{n-1}-1)j/2]}
        {\sup} |S_{m,N}|^{\frac{1}{q} - \frac{1}{p}} \bigg( \sum_{k \in S_{m,N}} |x_k|^p \bigg)^{\frac{1}{p}}\\
        = &\max \Bigl\{n,(j+1)^{\frac{1}{q}-\frac{1}{p}}(n^p+a_2^p)^{\frac{1}{p}},
        (2j+1)^{\frac{1}{q}-\frac{1}{p}}(n^p+a_2^p+a_3^p)^{\frac{1}{p}},\\
        &\dots,((2^{n-1}-1)j+1)^{\frac{1}{q}-\frac{1}{p}} \bigl(n^p + \sum_{i=2}^{2^{n-1}}a_{i}^{p}\bigr)^{\frac{1}{p}}\Bigr\}.
    \end{align*}
Since $(rj+1)^{\frac{1}{q}-\frac{1}{p}} \leq (j+1)^{\frac{1}{q}-\frac{1}{p}}$ for each $r=1,2,\dots,2^{n-1}-1$, we obtain
    \begin{align*}
    (rj+1)^{\frac{1}{q}-\frac{1}{p}}\bigl(n^p + \sum_{i=2}^{r+1}a_{i}^{p}\bigr)^{\frac{1}{p}} &\leq
    (j+1)^{\frac{1}{q}-\frac{1}{p}}\bigl(n^p + \sum_{i=2}^{r+1}a_{i}^{p}\bigr)^{\frac{1}{p}}    \\
    &< 2^{-\frac{(n-1)}{p}}\bigl(n^p + \sum_{i=2}^{r+1}a_{i}^{p}\bigr)^{\frac{1}{p}} \\
    &< 2^{-\frac{(n-1)}{p}}(\underbrace{n^p + n^p + \dots+ n^p}_\text{$r+1$ times} )^{\frac{1}{p}} \\
    &= 2^{-\frac{(n-1)}{p}} (r+1)^{\frac{1}{p}}(n^p)^{\frac{1}{p}} \\
    &\leq 2^{-\frac{(n-1)}{p}}2^{\frac{(n-1)}{p}}n \\
    &= n.
    \end{align*}
It thus follows that
$$
    \|x^{(1)}+x^{(2)} + \dots + x^{(n)}\|_{\ell_{q}^{p}} = n.
$$

As we have remarked earlier, the largest absolute value of $x^{(1)} \pm x^{(2)} \pm \dots \pm x^{(n)}$ is equal to $n$
for every combination of $\pm$ signs. 
Moreover, it is clear that for $k \notin \{0,2j,\dots,(2^{n-1}-1)j\}$, the $k$-th term of $x^{(1)} \pm x^{(2)} \pm \dots \pm x^{(n)}$ is equal to 0.
Hence, we obtain
    \begin{align*}
        \|x^{(1)} \pm x^{(2)} \pm \dots \pm x^{(n)}\|_{\ell_{q}^{p}} = \underset{m \in \mathbb{Z}, N \in \omega} {\sup} &|S_{m,N}|^{\frac{1}{q} -
        \frac{1}{p}} \bigg( \sum_{k \in S_{m,N}} |x^{(1)}_k \pm x^{(2)}_k \pm \dots \pm x^{(n)}_k|^p \bigg)^{\frac{1}{p}} \\
        = \underset{m \in \mathbb{Z}\cap[0,(2^{n-1}-1)j], N \in \mathbb{Z}\cap[0,(2^{n-1}-1)j/2]} {\sup} &|S_{m,N}|^{\frac{1}{q} -
        \frac{1}{p}} \bigg( \sum_{k \in S_{m,N}} |x^{(1)}_k \pm x^{(2)}_k \pm \dots \pm x^{(n)}_k|^p \bigg)^{\frac{1}{p}}=n.
    \end{align*}
Consequently, we get
$$
    \frac{\sum_{\pm} \|x^{(1)} \pm x^{(2)} \pm \dots \pm x^{(n)}\|_{\ell_{q}^{p}}^{2}}{2^{n-1}\sum_{i=1}^{n}
    \|x_i\|_{\ell_{q}^{p}}} = \frac{2^{n-1}n^2}{2^{n-1}n} = n
$$
and
$$
    \min \|x^{(1)} \pm x^{(2)} \pm \dots \pm x^{(n)}\|_{\ell_{q}^{p}} = n,
$$
whence
$$
    C_{NJ}^{(n)}(\ell_{q}^{p}(\mathbb{Z})) =  C_{J}^{(n)}(\ell_{q}^{p}(\mathbb{Z})) = n.
$$

{\em Case 2: $d > 1$}. Here we choose $j \in \mathbb{Z}$ to be a nonnegative, even integer such that
$j > 2^{(\frac{n-1}{d})(\frac{q}{q-p})}-1$ or, equivalently,
$$
(j+1)^{d(\frac{1}{q}-\frac{1}{p})} < 2^{-\frac{(n-1)}{p}}.
$$
Then, using the sequences
$$
            x^{(i)} = (x^{(i)}_{k_1})_{k_1 \in \mathbb{Z}} \in \ell_{q}^{p}(\mathbb{Z}),\quad i=1,\dots,n,
$$
in the case where $d=1$, we now define $x^{(i)} := (x^{(i)}_{k})_{k \in \mathbb{Z}^d} \in \ell_{q}^{p}(\mathbb{Z}^d)$ for
$i=1,\dots,n$, where
$$
            x^{(i)}_k =
            \begin{cases}
            x^{(i)}_{k_1}, &k = (k_1,0,0,\dots,0), \\
            0, & \text{otherwise}.
            \end{cases}
$$
We shall then obtain
$$
    C_{NJ}^{(n)}(\ell_{q}^{p}(\mathbb{Z}^d)) = C_{J}^{(n)}(\ell_{q}^{p}(\mathbb{Z}^d)) = n,
$$
as desired.
\end{proof}

\begin{corollary}
    For $1 \leq p < q < \infty$, the space $\ell_{q}^{p}$ is not uniformly non-$\ell_{n}^{1}$.
\end{corollary}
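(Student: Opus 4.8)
The plan is to obtain the corollary as an immediate consequence of the theorem just proved, together with the characterizations of uniform non-$\ell_n^1$-ness recorded in the introduction. Recall from \cite{KTH} and \cite{MNPZ} that a Banach space $X$ is uniformly non-$\ell_n^1$ if and only if $C_{NJ}^{(n)}(X) < n$, equivalently if and only if $C_J^{(n)}(X) < n$. First I would invoke the equality $C_{NJ}^{(n)}(\ell_q^p) = n$ established in the preceding theorem (the James constant would serve equally well). I would then apply this characterization in its contrapositive form: since uniform non-$\ell_n^1$-ness is equivalent to the strict inequality $C_{NJ}^{(n)} < n$, and this inequality fails when the constant equals $n$, the space $\ell_q^p$ cannot be uniformly non-$\ell_n^1$ for $1 \le p < q < \infty$.

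For a fully self-contained argument, I could instead read the conclusion directly off the construction in the proof of the theorem, bypassing the external characterization. There we exhibited unit vectors $x^{(1)}, \dots, x^{(n)} \in S_{\ell_q^p}$ with $\min \|x^{(1)} \pm x^{(2)} \pm \cdots \pm x^{(n)}\|_{\ell_q^p} = n$. Were $\ell_q^p$ uniformly non-$\ell_n^1$, there would exist some $\delta \in (0,1)$ satisfying $\min \|u_1 \pm \cdots \pm u_n\| \le n(1-\delta)$ for all $u_i \in S_{\ell_q^p}$; testing this on the constructed vectors would give $n \le n(1-\delta) < n$, a contradiction. This route settles the claim using only the definition of the property and the explicit sequences already at hand.

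Because all of the quantitative work is already carried out in the theorem, I do not expect a genuine obstacle here: the corollary is a short logical deduction. The only point deserving care is the logical direction and the strictness of the inequality in the characterization, so I would make explicit that $C_{NJ}^{(n)}(\ell_q^p) = n$ is precisely the negation of $C_{NJ}^{(n)}(\ell_q^p) < n$, rather than gloss over it.
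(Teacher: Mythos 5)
Your proposal is correct and takes essentially the same route as the paper, which leaves the corollary without a written proof precisely because it follows immediately from the theorem together with the characterization recorded in the introduction (namely that $X$ is uniformly non-$\ell_n^1$ if and only if $C_{NJ}^{(n)}(X)<n$, equivalently $C_J^{(n)}(X)<n$). Your alternative self-contained argument via the definition and the explicit unit vectors is also valid, but it is just an unwinding of the same characterization.
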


\begin{corollary}
    For $1 \leq p < q < \infty$, the space $\ell_{q}^{p}$ is not uniformly $n$-convex.
\end{corollary}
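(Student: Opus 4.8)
The plan is to deduce this corollary directly from its predecessor, exploiting the implication between the two uniform properties recorded in the introduction. Recall from \cite{GHP} that every uniformly $n$-convex Banach space is uniformly non-$\ell_n^1$. Reading this implication contrapositively, any space that fails to be uniformly non-$\ell_n^1$ must also fail to be uniformly $n$-convex. The preceding corollary establishes precisely that $\ell_q^p$ is not uniformly non-$\ell_n^1$ whenever $1 \le p < q < \infty$, so the present statement follows at once without any new construction.

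For a self-contained verification I would instead negate the defining condition directly. Uniform $n$-convexity requires that for each $\varepsilon \in (0,n]$ some $\delta \in (0,1)$ force $\|u_1 + \cdots + u_n\|_{\ell_q^p} \le n(1-\delta)$ whenever $u_1,\dots,u_n \in S_{\ell_q^p}$ satisfy $\|u_1 \pm \cdots \pm u_n\|_{\ell_q^p} \ge \varepsilon$ for every sign choice other than the all-plus one. To defeat this, I would reuse the sequences $x^{(1)},\dots,x^{(n)}$ built in the proof of the preceding theorem: they lie on the unit sphere and satisfy $\|x^{(1)} \pm \cdots \pm x^{(n)}\|_{\ell_q^p} = n$ for \emph{every} combination of signs. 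Choosing $\varepsilon = n$, the hypothesis $\|x^{(1)} \pm \cdots \pm x^{(n)}\|_{\ell_q^p} \ge \varepsilon$ holds for all the non-all-plus sign patterns, yet $\|x^{(1)} + \cdots + x^{(n)}\|_{\ell_q^p} = n > n(1-\delta)$ for every $\delta \in (0,1)$. Hence no admissible $\delta$ can exist, and $\ell_q^p$ is not uniformly $n$-convex.

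I do not anticipate a genuine obstacle: the entire analytic difficulty has already been absorbed into the construction underlying the preceding theorem, where the extremal sequences attaining norm $n$ in all sign combinations are produced. The only points requiring care are bookkeeping ones, namely confirming the correct direction of the \cite{GHP} implication so that the contraposition is valid, and checking that the witnessing sequences satisfy the hypothesis of the definition for the choice $\varepsilon = n$ (in particular that $\varepsilon = n$ is permissible, which it is since the definition allows $\varepsilon \in (0,n]$). Both routes collapse to this single observation, so I would present the contrapositive argument as the main line and relegate the explicit construction to a remark.
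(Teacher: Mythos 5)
Your main line of argument—contraposing the implication from \cite{GHP} that uniform $n$-convexity implies the uniformly non-$\ell_n^1$ property, and combining it with the preceding corollary—is exactly the route the paper intends (it is spelled out in the introduction, and the corollary is left as an immediate consequence). Your supplementary direct verification using the extremal sequences $x^{(1)},\dots,x^{(n)}$ with $\varepsilon=n$ is also correct, but it adds nothing beyond the contrapositive argument, which suffices.
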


\bigskip

\noindent{\bf Acknowledgement}. The work is part of the first author's thesis. Both authors are supported by P2MI 2021
Program of Bandung Institute of Technology.

\bigskip


\begin{thebibliography}{999}
\bibitem{B} B.~Beauzamy, \textit{Introduction to Banach spaces and Their Geometry}, 2nd Ed., North Holland, Amsterdam-
New York-Oxford, 1985.

\bibitem{GHP} H.~Gunawan, D.I.~Hakim, and A.S.~Putri, ``On geometric properties of Morrey spaces'', {\it UFA Math. J.}
{\bf 13} (2021), no.~1, 131--136.

\bibitem{GKS} H.~Gunawan, E.~Kikianty, and C.~Schwanke, ``Discrete Morrey spaces and their inclusion properties'',
{\it Math. Nachr.} \textbf{291} (2018), no.~8-9, 1283--1296.

\bibitem{GKSS} H.~Gunawan, E.~Kikianty, Y.~Sawano, and C.~Schwanke, ``Three geometric constants for Morrey spaces'',
{\it Bull. Korean. Math. Soc.} \textbf{56} (2019), no.~6, 1569--1575.

\bibitem{J} C.~James, ``Uniformly non-square Banach spaces'', {\it Ann. Math.} \textbf{80} (1964), 542--550.

\bibitem{KTH} M.~Kato, Y.~Takahashi, and K.~Hashimoto, ``On $n$-th Von Neumann-Jordan constants for Banach spaces'',
{\it Bull. Kyushu Inst. Tech.} \textbf{45} (1998), 25-–33.

\bibitem{MNPZ} L.~Maligandra, L.I.~Nikolova, L.-E.~Persson, and T.~Zachariades, ``On $n$-th James and Khintchine constants
of Banach spaces'', {\it Math. Ineq. Appl.} \textbf{11} (2007), no.~1, 1–-22.

\bibitem{W} W.A.~Wojczynski, ``Geometry and martingales in Banach spaces, Part II'', in {\it Probability in Banach
Spaces IV}, J. Kuelbs, ed., Marcel-Dekker, 1978, 267--517.
\end{thebibliography}
\end{document}